\documentclass[12pt]{amsart}
\pdfoutput=1
\raggedbottom
\sloppy

\usepackage[margin=1in]{geometry}
\usepackage[utopia]{mathdesign}
\usepackage[mathscr]{euscript}
\usepackage{hyperref}

\hypersetup{
    colorlinks=true,
    linkcolor=black,
    citecolor=black,
    filecolor=black,
    urlcolor=black,
}

\usepackage{amsmath}
\usepackage{graphicx}
\usepackage{units}
\usepackage{verbatim}
\usepackage{tikz}
\usepackage{mathtools}
\usepackage[utf8]{inputenc}

\newtheorem{theorem}{Theorem}

\newtheorem{claim}{Claim}

\newcommand{\mb}{\mathbb}
\newcommand{\eps}{\varepsilon}

\begin{document}

\title{A Point in a $nd$-Polytope is the Barycenter of $n$ points in its $d$-Faces}

\author[Michael Gene Dobbins]{Michael Gene Dobbins \\ GaiA, Postech}


\begin{tikzpicture}[overlay, remember picture]
\path (current page.north west) ++(24pt,-24pt) node[below right] 
{\href{http://doi.org/10.1007/s00222-014-0523-2}{%
\vbox{\noindent\footnotesize
The final publication is available at springerlink.com\newline 
DOI~10.1007/s00222-014-0523-2}}}%
;
\end{tikzpicture}

\begin{abstract}
Using equivariant topology, we prove that it is always possible to find $n$ points in the $d$-dimensional faces of a $nd$-dimensional convex polytope $P$ so that their center of mass is a target point in $P$.  Equivalently, the $n$-fold Minkowski sum of a $nd$-polytope's $d$-skeleton is that polytope scaled by $n$.  This verifies a conjecture by Takeshi Tokuyama.
\end{abstract}

\subjclass[2010]{51M04, 51M20, 52B11, 55N91}

\maketitle

The goal of this article is to prove the following theorem, recently conjectured by Takeshi Tokuyama for the 1-skeleton (see Acknowledgments),
which to the author's knowledge, was the first time the question had been considered.  
The conjecture was originally motivated by the engineering problem of determining how counterweights can be attached to some 3-dimensional object to adjust its center of mass to reduce vibrations when the object moves  \cite{arakelian2005shaking}\cite{socg2014weight}. 

\begin{theorem}\label{thrm:barycenter}
For any $nd$-polytope $P$ and for any point $p\in P$,  there are points\footnote{%
Note the $p_i$ are not required to be distinct.  In particular, if $p$ is in a $d$-face, we may choose {$p = p_1 = \dots = p_n$}.}
$p_1,\dots,p_n$ in the $d$-skeleton $S$ of $P$ with barycenter $p = \nicefrac{1}{n}(p_1 +\dots+ p_n)$. 
Equivalently,
\[ nP = \underbrace{S +\cdots +S}_{n} \vspace{-8pt} \]
where ($+$) denotes Minkowski summation. 
\end{theorem}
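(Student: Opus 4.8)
The plan is to recast the statement as the surjectivity of the \emph{barycenter map} $\beta\colon S^{\times n}\to P$, $\beta(p_1,\dots,p_n)=\tfrac1n(p_1+\cdots+p_n)$, whose image is exactly $\tfrac1n(S+\cdots+S)$. Two reductions come first. Since $S$ is compact, $\beta(S^{\times n})$ is closed, so it suffices to hit every point of $\mathrm{int}\,P$; points on the boundary are then reached by a limiting argument, and the case $p\in S$ is handled by the footnote. Next comes the dimension bookkeeping: $S^{\times n}$ is a pure polytopal complex of dimension $nd$ whose top cells are products $F_1\times\cdots\times F_n$ of $d$-faces of $P$, and $\dim P=nd$ as well, so $\beta$ is a proper map between objects of equal dimension---a degree invariant is the natural weapon. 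The symmetric group $S_n$ acts on $S^{\times n}$ by permuting factors and $\beta$ is $S_n$-invariant, and this symmetry must be kept throughout.

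For a generic $q\in\mathrm{int}\,P$ I would study $\beta^{-1}(q)$ cell by cell: inside each product $F_1^{\circ}\times\cdots\times F_n^{\circ}$ of relatively open $d$-faces the fibre is cut out by the $nd$ affine equations $\sum p_i=nq$ and is therefore generically a finite transverse point set, and one would like the resulting total count (with signs from a coherent orientation, or merely modulo $2$) to be an invariant $\deg\beta$ depending on $\beta$ alone. Granting this, $\deg\beta$ can be evaluated by pushing $q$ toward a vertex $v$ of $P$, where the only representation has $p_1=\cdots=p_n=v$ and $\beta$ restricts to averaging on the tangent cone; the preimage of a nearby generic point is then a nonempty, explicitly describable set of transverse points, and a convexity argument should show these do not cancel, so $\deg\beta\ne 0$ and $\beta^{-1}(q)\ne\emptyset$ for all $q$. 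Dually, one may run the test-map form of the contradiction: a missed $q$ would yield, after composing with the retraction $P\setminus\{q\}\simeq S^{nd-1}$, an $S_n$-invariant map $S^{\times n}\to S^{nd-1}$ whose existence one wants to exclude equivariantly.

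I expect the real obstacle to be the well-definedness of that count. The complex $S^{\times n}$ fails to be a pseudomanifold as soon as $(n-1)d\ge 2$: a $(d-1)$-face of $P$ has link of dimension $(n-1)d$, hence lies in at least three $d$-faces, so a codimension-one cell $F_1\times\cdots\times G\times\cdots\times F_n$ of $S^{\times n}$ (with $G$ a $(d-1)$-face) is shared by three or more top cells, three or more arcs of solutions can meet there in a one-parameter family, and a naive modulo-$2$ preimage count is \emph{not} invariant as $q$ moves. (For the originally conjectured case $n=2,d=1$ this does not happen---there $S=\partial P$ is a $1$-dimensional pseudomanifold and the plain degree argument works---so the difficulty is genuinely about larger $n$ or $d$.) Overcoming it is where equivariant topology must do the work: one wants either a manifold resolution of $S^{\times n}$ compatible with $\beta$ and with the $S_n$-action, or an index/obstruction argument (in the spirit of Fadell--Husseini, or a Dold-type theorem for the relevant group) that detects the full equivariant structure rather than a bare preimage count---and, in either route, a careful treatment of the degenerate strata, namely coincidences among the $p_i$ and points $p_i$ lying in faces of dimension below $d$, so that neither the retraction argument nor the index computation is spoiled by the singularities of $S^{\times n}$.
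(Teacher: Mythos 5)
There is a genuine gap: your proposal correctly identifies the central difficulty --- that $S^{\times n}$ is not a pseudomanifold once $(n-1)d\ge 2$, so no naive (mod $2$ or signed) count of $\beta^{-1}(q)$ is invariant --- but it does not overcome it. The final paragraph only lists hopes (``a manifold resolution of $S^{\times n}$ compatible with $\beta$ and the $S_n$-action, or an index/obstruction argument in the spirit of Fadell--Husseini or a Dold-type theorem''), and none of these is constructed or even specified; by your own admission the degree route collapses, so the proposal stops exactly where the proof has to begin. Two further points would fail as stated even if one tried to push them through: the ``dual'' test-map step asks to exclude an $S_n$-\emph{invariant} map $S^{\times n}\to \mb{S}^{nd-1}$, but invariant maps to a sphere always exist (constant maps), so no equivariant obstruction can rule them out --- one needs a target on which the group acts without fixed points, which your setup does not provide; and the local degree computation at a vertex is itself delicate (already for $n=2$, $d=1$ the two local representations at a simple vertex come in a pair, so a mod-$2$ count can vanish), so even well-definedness aside the evaluation step is not secured.

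For comparison, the paper sidesteps the singularity of $S^{\times n}$ entirely by fixing the target point $p=0$ and working with the polytope $Q=P^n\cap X$ of configurations with barycenter $0$, which (after a genericity perturbation) is an honest convex polytope whose boundary is a sphere. The test map is not $\beta$ but a combinatorial ``dimension defect'' map $\phi:\partial Q\to Y$, recording how the face dimensions $\dim E_i$ deviate from $d$, interpolated over the barycentric subdivision. Since $n$ is taken prime, $\mb{Z}_n$ acts on $Y\setminus\{0\}$ without fixed points, and Dold's theorem (applied to the $(n-1)$-skeleton of $Q$, which is $(n-2)$-connected) forces $\phi$ to vanish on an $(n-1)$-face; an integrality argument along the corresponding chain of faces then pushes the zero down to a vertex of $Q$, i.e.\ to an $n$-tuple of points in the $d$-skeleton with barycenter $0$. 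Composite $n$ is handled by induction on the prime factorization, and boundary points and non-generic intersections by perturbation. If you want to salvage your outline, the lesson is that the equivariant input must be applied to a fixed-point-free target built from the combinatorics of face dimensions (or an equivalent device), not to the barycenter map itself; as written, your argument has no mechanism that produces a point of $\beta^{-1}(q)$.
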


Theorem \ref{thrm:barycenter} is loosely related to several results in topology and combinatorial geometry.  
One version of the Borsuk-Ulam Theorem states that any continuous map $\psi$ from the sphere $\mb{S}^d$ to $\mb{R}^d$ attains the same value at an antipodal pair $\exists x.\ \psi(-x) = \psi(x)$.  This has numerous applications, a collection of which is presented in \cite{matouvsek2003using}.  
The barycenter of antipodal points is the origin, so  
Theorem \ref{thrm:barycenter} for $n=2,d=1$ is just 
the Borsuk-Ulam Theorem for $d=1$ where $\psi$ is the radial distance of a convex polygon in polar coordinates. 
Gromov's Waist Theorem may be viewed as a further refinement of the Borsuk-Ulam Theorem that says a continuous map from a sphere to a Euclidean space of lower dimension is constant on a large subset of correspondingly higher dimension distributed widely around the sphere \cite{gromov2003isoperimetry}.   
Intuitively, we may view Theorem \ref{thrm:barycenter} for $n=2$ as saying such a subset for the radial distance of a polytope includes an antipodal pair of a correspondingly lower dimensional subspace, the $d$-skeleton. 
Munkholm gave a generalization of the Borsuk-Ulam Theorem to cyclic actions of prime order on the sphere 
that says an equivariant map must be constant on a large subspace of orbits in the quotient space \cite{munkholm1969borsuk}. 
We may view Theorem \ref{thrm:barycenter} for $n$ prime as being related in an analogous way. 
After seeing the proof, we will make this connection more explicit.

In another sense, Theorem \ref{thrm:barycenter} is analogous to Carathéodory's Theorem.  Carathéodory's Theorem states that the convex hull of a set $X \subset \mb{R}^d$ is the union of convex hulls of all subsets of $d+1$ points in $X$.  Letting $X$ be the 0-skeleton (vertices) of a polytope, this means any point in a $d$-polytope is a weighted barycenter of $d+1$ vertices.  Bárány and Karasev recently gave conditions on a set for its convex hull to be the union of convex hulls of a smaller number of points \cite{barany2012notes}.  Here, we similarly reduce the number of points, but we further require any point in $P$ to be the exact barycenter and we only consider $P$ as the convex hull of a skeleton. 
Theorem \ref{thrm:barycenter} also falls in the general problem of determining how a set can be expressed as a Minkowski sum of smaller sets.  
Such decompositions have connections to algebraic geometry \cite{beckwith2012minkowski}\cite{sturmfels1996grobner}.

There are several related questions we may consider.  Does Theorem \ref{thrm:barycenter} still hold when the points are restricted to be in skeletons of distinct dimension?  In the case of 2 points, the theorem extends to $(2d{+}1)$-dimensional polytopes where the points are in the $d$ and $(d{+}1)$-skeleton respectively \cite{socg2014weight}.  
The regular simplex, however, is a counterexample to generalizing the theorem for any other pair of skeletons.  
The question still remains open for more than 2 points. 
Does Theorem \ref{thrm:barycenter} hold for the weighted barycenter with fixed but unequal weights?  The theorem fails to generalize for the 1-skeleton of a triangular prism in $\mb{R}^3$ with weights $1,1,1+\eps$, but may generalize when more weighted points are allowed \cite{socg2014weight}.  Note that the theorem implies an analogous statement for weights and skeletons of distinct dimension that result from sequential partitions of $d$ into equal parts.  For example, any point $p$ in a 15-polytope can be expressed as $p= \nicefrac{1}{5}(p_1 +p_2 +p_3 +p_4) + \nicefrac{1}{15}(q_1 +q_2 +q_3)$ for $p_i$ in its 3-skeleton and $q_i$ in its 1-skeleton. 
Another important question that is not addressed here is how to actually compute these points.  

\begin{proof}[Proof of Theorem \ref{thrm:barycenter}]
Assume first that $n$ is prime. 
Assume also that $p$ is in the interior of $P$, and choose coordinates so $p = {0}$.
Let 
\[ X = \left\{ \left[
\begin{array}{ccc} 
x_{1,1} & \cdots & x_{1,n} \\
 \vdots & \ddots & \vdots \\
x_{nd,1} & \cdots & x_{nd,n} \\
\end{array} \right]
\in \mb{R}^{nd \times n} :
\left[ \begin{array}{c} x_{1,1} \\ \vdots \\ x_{nd,1} \end{array} \right] 
+
\dots
+
\left[ \begin{array}{c} x_{1,n} \\ \vdots \\ x_{nd,n} \end{array} \right] 
= {0} \right\} \]
and let $Q = P^n \cap X$. 
That is, $Q \subset X$ is the convex polytope consisting of all $n$-tuples of points in $P$ with barycenter at the origin. 
Note $Q$ has dimension $n^2d-nd$. 
For now, we assume that $X$ intersects $P^n$ generically.  That is, every face of $Q$ is of the form $F = (E_1 \times \cdots \times E_n) \cap X$ where each $E_i$ is a face of $P$ and $\dim F = \dim E_1 + \cdots +\dim E_n -nd$.  Later we will use a perturbation argument to deal with the non-generic case.

We define a map $\phi = (\phi_1, \dots, \phi_n) : \partial Q \to Y$ where $Y = \{y \in \mb{R}^n: y_1 +\cdots+ y_n=0\}$  as follows.
We first take a barycentric subdivision of $\partial Q$, define $\phi$ on the vertices of the subdivision, and then interpolate on each simplex.  For a proper face $F$ of $Q$, let $q_F$ be the barycenter of the vertices of $F$, and let $\phi_i$ be the difference of $\dim E_i$ from the expected dimension, 
\[ \phi_i(q_F) = \dim E_i - \nicefrac{1}{n} \sum_{i=1}^n \dim E_i.\]  
Note that $\sum_{i=1}^n \phi_i(q_F) = 0$, so $\phi(q_F) \in Y$, 
and if $F$ is a vertex of $Q$, then $\dim E_1 +\cdots+ \dim E_n = nd$, so $\phi_i(q_F) = {\dim E_i -d}$. 
Now extend $\phi_i$ to $\partial Q$ by linear interpolation from the vertices to the simplices of the barycentric subdivision. 

Our goal is to find a vertex $q$ of $Q$ such that $\phi(q)=0$.
If we have such a vertex ${q = (E_1 \times \cdots \times E_n) \cap X}$, then $\dim E_i = d$ for all $i \in [n]$, 
which implies the columns of $q$ are points in the $d$-skeleton of $P$ with barycenter $0$.  
We first find a point in a $(n{-}1)$-face of $Q$ where $\phi$ vanishes, and then from there find our way down to a vertex.

\begin{claim}\label{claim:face}
$\phi$ vanishes on some point $z$ in a $(n{-}1)$-face of $Q$.
\end{claim}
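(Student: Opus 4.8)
The plan is to recast Claim~I as an equivariant extension problem for the cyclic group $G=\mb{Z}/n$ and settle it with a Borsuk--Ulam-type theorem. First I would record the symmetry built into the construction: $G$ acts on $\mb{R}^{nd\times n}$ by cyclically permuting columns, which preserves both $X$ and $P^n$ and hence acts on $Q$ and on $\partial Q$, and it acts on $\mb{R}^n$ by cyclically permuting coordinates, which preserves $Y$. A cyclic shift of the tuple $(E_1,\dots,E_n)$ cyclically shifts the dimensions $\dim E_i$, so the vertex values obey $\phi(q_{g\cdot F})=g\cdot\phi(q_F)$; since the barycentric subdivision and the linear interpolation are themselves $G$-equivariant, $\phi\colon\partial Q\to Y$ is a $G$-map. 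Next I would check that $G$ acts \emph{freely} on $\partial Q$: a point of $Q$ fixed by a non-identity element of $G$ has all columns equal (here we use that $n$ is prime), hence equal to some $v$ with $nv=0$, i.e.\ $v=0=p$; since $p$ is interior to $P$, this zero matrix lies in the relative interior of $Q$, so it is not on $\partial Q$.

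Next, let $K\subseteq\partial Q$ be the subcomplex of the barycentric subdivision spanned by the barycenters $q_F$ of the faces $F$ of $Q$ with $\dim F\le n-1$ --- the $(n-1)$-skeleton of $Q$. This is a free $G$-subcomplex of dimension $n-1$, and the geometric heart of the argument is that $K$ is $(n-2)$-connected. Indeed $\partial Q$ is a sphere of dimension $m-1$, where $m=\dim Q=n^2d-nd$ is at least $n$: if $m=n$ then $K=\partial Q\cong\mb{S}^{n-1}$, which is $(n-2)$-connected; and if $m>n$ then $m-2\ge n-1$, so the inclusion of the $(n-1)$-skeleton of the sphere induces isomorphisms on $\pi_i$ for $i\le n-2$, whence $\pi_i(K)\cong\pi_i(\mb{S}^{m-1})=0$ in that range.

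Now suppose, toward a contradiction, that $\phi$ vanishes nowhere on $K$. Since $G$ acts on $Y$ through orthogonal maps, composing $\phi|_K$ with $y\mapsto y/\|y\|$ gives a $G$-map from $K$ to the unit sphere of $Y$; because $n$ is prime this sphere carries a free $G$-action, and it has dimension $n-2$. We would then have a $G$-map from an $(n-2)$-connected free $G$-complex to a free $G$-space of dimension $n-2$, which is impossible by Dold's theorem. Therefore $\phi(z)=0$ for some $z\in K$. Such a $z$ lies in a face of $Q$ of dimension at most $n-1$, and since $\dim Q=n^2d-nd>n-1$ every such face is contained in a face of dimension exactly $n-1$; thus we may take $z$ in an $(n-1)$-face, which proves the claim.

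The step I expect to be the main obstacle is the connectivity estimate and its precise interplay with the target: one needs $K$ to be $(n-2)$-connected --- one degree of connectivity beyond the dimension $n-2$ of the unit sphere in $Y$ --- and it is exactly this off-by-one that Dold's theorem (for $\mb{Z}/n$ with $n$ prime) turns into a contradiction. A subsidiary point to keep in mind is that the genericity hypothesis serves only to pin down the face structure of $Q$; the features actually used here, namely that $\partial Q$ is a sphere and $K$ is its $(n-1)$-skeleton, hold regardless, and the general situation is recovered later by the perturbation argument.
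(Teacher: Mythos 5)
Your proposal is correct and follows essentially the same route as the paper: restrict the $G$-equivariant map $\phi$ to the $(n{-}1)$-skeleton of $Q$, establish $(n{-}2)$-connectedness, normalize to get a map into the $(n{-}2)$-sphere in $Y$ (free for $n$ prime), and contradict Dold's theorem. Your extra verification that $G$ acts freely on $\partial Q$ and your derivation of the connectivity via $\partial Q\cong\mb{S}^{m-1}$ (rather than via contractibility of $Q$ and attaching cells of dimension $\ge n$) are harmless variants of the paper's argument.
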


Let $G \simeq \mathbb{Z}_n$ act on $X$ by cyclically permuting columns and on $Y$ by cyclically permuting coordinates,
and observe that $\phi$ is $G$-equivariant, $\phi_i(gx)=\phi_{gi}(x)$ for $g \in G$.
Let $T$ be the $(n{-}1)$-skeleton of $Q$, 
and observe that $T$ is $(n{-}2)$-connected, since $Q$ is $(n{-}2)$-connected and attaching cells of dimension $n$ or greater does not change $(n{-}2)$-connectivity.  

Suppose Claim \ref{claim:face} failed. 
Then, the restriction $\phi: T \to Y$ would not vanish, so we could define a map 
$ \phi': T \to (\mb{S}^{n-2} \hookrightarrow Y)$ by normalizing the image $\phi'(x) = \frac{\phi(x)}{\| \phi(x) \|}$.
Since $n$ is prime, every $g \in G$ is a generator, except for the identity.  
This implies that the only fixed point $y \in Y$, $y_{gi} = y_i$, is $y=0$.
Hence $\mb{S}^{n-2}$ is fixed point free.  
But we see now, $\phi'$ would be a $G$-equivariant map from a $(n{-}2)$-connected space to a fixed point free space, which by Dold's Theorem is impossible \cite{dold1983simple}\cite{matouvsek2003using}.  Thus, the claim holds.

The point $z$ from Claim \ref{claim:face} is in some simplex of the barycentric subdivision of $\partial Q$ with vertices $q_0,\dots,q_{n-1}$ respectively defined by some chain of faces $F_0 \subset \cdots \subset F_{n-1}$.  Let
\[ z = \sum_{k=0}^{n-1} t_k q_k \quad \text{where} \quad t_k \geq 0,\ \sum_{k=0}^{n-1} t_k = 1. \]
Since $X$ intersects $P^n$ generically, there is a sequence $i_0, \dots, i_{n-1}$ such that for all $k \in [n-1]$, $\dim E_{k,i_k} = \dim E_{k-1,i_k} +1$ and for $i \neq i_k$, $\dim E_{k,i} = \dim E_{k-1,i}$ 
where $E_{k,i}$ are faces of $P$ and $F_{k} = {(E_{k,1} \times \cdots \times E_{k,n}) \cap X}$. 
In particular, there is some index $j$ where the dimension of the face does not increase, 
$j \in {[n] \setminus \{i_1,\dots,i_{n-1}\}}$ so $\dim E_{0,j} = \dots = \dim E_{n-1,j}$.  Hence, $\phi_j(q_k) = \phi_j(q_0) -\nicefrac{k}{n}$.  Since $\phi$ is defined on simplices by linear interpolation, we have
\[ \phi_j(z) = \sum_{k=0}^{n-1} t_k \phi_j(q_k) = \phi_j(q_0) -r = 0 \quad \text{where} \quad r = \sum_{k=1}^{n-1} \tfrac{t_k k}{n} \]
Observe that $0 \leq r < 1$, and since $\phi_j(q_0)$ is an integer, $r$ must also be an integer, so $r = 0$, 
which implies $t_k =0$ for $k \neq 0$.  Thus, $z = q_0$ is a vertex of $Q$, so the theorem holds under our assumptions.

So far we have only considered the case where $X$ intersects $P^n$ generically; now suppose it does not.
The polytope can be defined by a linear inequality $P = \{x : A x \leq {1}\}$ for some $A \in \mb{R}^{f \times nd}$ where $f$ is the number of facets of $P$.  
For $\eps \in \mb{R}^{f \times nd}$, let $P_\eps = \{x : (A +\eps)x \leq {1}\}$, 
let $S_\eps$ be the $d$-skeleton of $P_\eps$, and let $S = S_0$ be the the $d$-skeleton of $P$. 
For almost every $\eps$ sufficiently small, $X$ does intersect $P_\eps^n$ generically, and by the argument just given, there is some $z_\eps \in S_\eps^n \cap  X$. 
Since $S_\eps$ is bounded for $\eps$ small, there is some limit point 
$z \in \lim_{\eps \to 0} z_\eps \subset S^n \cap  X$ where $\lim_{\eps \to 0} z_\eps$ is the set of limits of all convergent sequences.

Similarly, if $p$ is on the boundary of $P$, then a sequence $p_\eps$ with $\lim_{\eps \to 0} p_\eps =p$ will give  such a sequence $z_\eps$ with $z \in \lim_{\eps \to 0} z_\eps$.

Now consider $n =n_1\cdots n_k$ where $n_i$ is prime. We will argue by induction on $k$.  The theorem holds trivially if $n=1$.  Suppose the theorem holds for a $nd$-polytope $P$ and $m = n_1 \cdots n_{k-1}$ points.  Then, we can find points $p_{1},\dots,p_{m}$ in the $n_kd$-skeleton of $P$ such that $p = \nicefrac{1}{m} \sum_{i=1}^{m} p_i$. 
Since the theorem holds for $n_k$, each $p_i$ can be expressed as the barycenter of points $p_{i,1},\dots,p_{i,n_k}$ in the $d$-skeleton of $P$, $p_i = \nicefrac{1}{n_k} \sum_{j=1}^{n_k} p_{i,k}$.
Together this gives $p = \nicefrac{1}{n} \sum_{i=1}^{m}\sum_{j=1}^{n_k} p_{i,j}$
Thus, the theorem holds for all $n$.
\end{proof}

The introduction noted a connection between Theorem \ref{thrm:barycenter} and a Borsuk-Ulam type Theorem for $\mb{Z}_n$-actions for $n$ prime.  This may be seen more explicitly in an alternate proof of Claim \ref{claim:face} using the Euler class. 

The Euler cohomology class is a topological invariant of a vector bundle that indicates how cycles in the base space intersect the kernel of a generic section. 
In particular, if for any $n$ prime, the $\mb{Z}_n$-Euler class of a rank $k$ vector bundle over a CW-complex is non-trivial, then any section must vanish at some point on a $k$-cell of the base space. 
A map $\psi : \mb{S}^j \to \mb{R}^k$ defines a section of the trivial bundle $\pi : \mb{S}^j \times \mb{R}^k \to \mb{S}^j$.  A $\mb{Z}_n$-equivariant map  
defines a section of a vector bundle on the orbits of the $\mb{Z}_n$-action, and  
Munkholm showed that the Euler class of this vector bundle is non-trivial if $j \geq (n-1)k$ 
{\cite[Proposition 3.1]{munkholm1969borsuk}.}  
This implies that the map must be constant on some orbit, which for $n=2$ and $j=k$ is the Borsuk-Ulam Theorem.
A nice informal introduction to the Euler class and equivariant cohomology on smooth manifolds was recently given in \cite{tu2011equivariant}.

Returning to Claim \ref{claim:face}, 
the function $\phi$ above defines a section of the trivial bundle on ${(\partial Q \times Y)} \simeq {(\mb{S}^{n^2d-nd-1} \times \mb{R}^{n-1})}$ and since $\phi$ is $G$-equivariant, $\phi / G$ defines a section of the bundle 
\[ \pi : (\partial Q \times Y)/G \to \partial Q/G, \quad \pi \{(xg,yg): g \in G \} = \{xg : g \in G\}.\]
The $\mb{Z}_n$-Euler class of this vector bundle is non-trivial, so $\phi / G$ must vanish on some $(n{-}1)$-cell of $\partial Q /G$, and this lifts to a $(n{-}1)$-face of $Q$ where $\phi$ vanishes. 
While this approach requires more details, seeing a basic overview may provide geometric intuition behind the use of Dold's Theorem.

\section*{Acknowledgments}

I would especially like to thank Otfried Cheong and Stefan Langerman for their insights, and Pavle Blagojevic for suggesting a shorter proof of Claim \ref{claim:face}.  I would also like to thank Helmut Alt, Sang Won Bae, Moritz Firsching, Andreas Holmsen, Ivan Izmestiev, Roman Karasev, Igor Rivin, Louis Theran, and Takeshi Tokuyama for helpful discussions. 

This research was made possible by the 2013 Korean Workshop on Computational Geometry in Kanazawa, Japan, where Professor Tokuyama communicated his conjecture.  
This research was supported by NRF grant 2011-0030044 (SRC-GAIA) funded by the government of Korea.

\bibliographystyle{plain}
\bibliography{barycenter}

\end{document}